\newtheorem{theorem}{Theorem}
\newtheorem{lemma}[theorem]{Lemma}
\newtheorem{remark}{Remark}
\newcommand{\bal}[1] {\ensuremath{\left(\begin{array}{#1}}}
\newcommand{\ear} {\ensuremath{\end{array}\right)}}
\newcommand{\bals}[1] {\ensuremath{\left[\begin{array}{#1}}} 
\newcommand{\ears} {\ensuremath{\end{array} \right] }} 
\let\leq\leqslant
\let\geq\geqslant
\newcommand{\calC}{\ensuremath{\mathcal{C}}}
\newcommand{\calE}{\ensuremath{\mathcal{E}}}
\newcommand{\calG}{\ensuremath{\mathcal{G}}}
\newcommand{\calH}{\ensuremath{\mathcal{H}}}
\newcommand{\calL}{\ensuremath{\mathcal{L}}}
\newcommand{\calV}{\ensuremath{\mathcal{V}}}
\newcommand{\bmat}{\begin{matrix}}
\newcommand{\emat}{\end{matrix}}
\newcommand{\bbm}{\begin{bmatrix}}
\newcommand{\ebm}{\end{bmatrix}}
\newcommand{\bpm}{\begin{pmatrix}}
\newcommand{\epm}{\end{pmatrix}}
\newcommand{\bse}{\begin{subequations}}
\newcommand{\ese}{\end{subequations}}
\newcommand{\beq}{\begin{equation}}
\newcommand{\eeq}{\end{equation}}
\newcommand{\ben}{\begin{enumerate}}
\newcommand{\een}{\end{enumerate}}
\newcommand{\beni}{\renewcommand{\labelenumi}{\roman{enumi}.}
\renewcommand{\theenumi}{\roman{enumi}}\begin{enumerate}}
\newcommand{\eeni}{\end{enumerate}\renewcommand{\labelenumi}{\arabic{enumi}.}
\renewcommand{\theenumi}{\arabic{enumi}}}
\newcommand{\bena}{\renewcommand{\labelenumi}{\alpha{enumi}.}
\renewcommand{\theenumi}{\alpha{enumi}}\begin{enumerate}}
\newcommand{\eena}{\end{enumerate}\renewcommand{\labelenumi}{\arabic{enumi}.}
\renewcommand{\theenumi}{\arabic{enumi}}}
\newcommand{\bit}{\begin{itemize}}
\newcommand{\eit}{\end{itemize}}
\newcommand{\R}{\ensuremath{\mathbb R}}
\title{\LARGE \bf
Multi-agent deployment under the leader displacement \\
measurement: a PDE-based approach}
\author{Jieqiang Wei, Emilia Fridman, Anton Selivanov, Karl H. Johansson 
\thanks{*This work is supported by Knut and Alice Wallenberg Foundation, Swedish Research Council, and Swedish Foundation for Strategic Research.}
\thanks{  Jieqiang Wei, Anton Selivanov and Karl H. Johansson are with the Department of Automatic Control, School of Electrical Engineering and Computer Science. 
 KTH Royal Institute of Technology,
 SE-100 44 Stockholm, Sweden.
 {\tt\small \{jieqiang, kallej\}@kth.se, antonselivanov@gmail.com}.
 \newline
Emilia Fridman is with the School of Electrical Engineering, Tel Aviv
 University, Israel.
 {\tt\small emilia@eng.tau.ac.il}.
}
}
\begin{document}

\maketitle

\begin{abstract}\label{s:Abstract}
We study the deployment of a first-order multi-agent system  over a desired smooth curve in 3D space. We assume that the agents have access to the local information of the desired curve and their displacements with respect to their closest neighbors, whereas in addition a leader is able to measure his absolute displacement with respect to the desired curve. In this paper we consider the case that the desired curve is a closed $\calC^2$ curve and we assume that the leader transmit his measurement to other agents through a communication network. We start the algorithm with displacement-based formation control protocol. Connections from this ODE model to a PDE model (heat equation), which can be seen as a reduced model, are then established. The resulting closed-loop system is modeled as a heat equation with delay (due to the communication). The boundary condition is periodic since the desired curve is closed. By choosing appropriate controller gains (the diffusion coefficient and the gain multiplying the leader state), we can achieve any desired decay rate provided the delay is small enough. The advantage of our approach is in the simplicity of the control law and the conditions.  Numerical example illustrates the efficiency of the method.
\end{abstract}

\begin{keywords}
Distributed parameters systems; Lyapunov method; Time delays; Multi-agent systems; Deployment.
\end{keywords}

\IEEEpeerreviewmaketitle

\section{Introduction}\label{s:Introduction}

Most of the existing work on multi-agent systems (MAS) consider interconnected agents modeled using ordinary differential equations (ODEs) or difference equations, and design the control for each agent depending either on global or local information. Besides these studies, there has been some work using partial differential equations (PDEs) to describe the spatial dynamics of multi-agent systems, e.g., \cite{FREUDENTHALER2016,fridman2014introduction,Frihauf2011,Qi2015,SERVAIS2014}. This approach is especially powerful when the number of the agents is large. One of the advantages of using PDE models for MAS is to reduce a high-dimensional ODE system to a single PDE. Reversely, given a desired PDE model, the corresponding performance and the control protocol for the individual agents (in ODE form) can be designed by proper discretization. In principle, this procedure is independent with respect to the number of agents, provided this number is large enough. 

In this paper, we consider a formation control problem which is referred to as deployment. This can be seen as a combination of a displacement-based and position-based formation control method. Each agent measures the relative positions (displacements) of its neighboring agents with respect to a global coordinate system. The desired formation is specified by the desired displacements between pair of agents. Then the agents, without knowing their absolute positions, achieve the desired formation by controlling the displacements of their neighboring agents. This will lead the agents to the desired formation up to a constant distance. As pointed out in \cite{oh2015survey}, in order to move the agents to the prescribed absolute positions, a small number of agents able to measure their absolute positions are needed. For existing ODE methods we refer to \cite{DEMARINA2017,Tanner2007} and the references within. 

Here we review some related work on the multi-agent deployment using PDE models. In \cite{Frihauf2011} and \cite{Qi2015}, the agents’ dynamics are modeled by reaction-advection-diffusion PDEs. By using the backstepping approach to boundary control, the agents are deployed onto families of planar curves and 2D manifolds, respectively. In \cite{MEURER2011}, the authors consider finite-time deployment of MAS into a planar formation, via predefined spatial-temporal paths, using a leader-follower architecture, i.e., boundary control. 
The same problem of deployment into planar curves using boundary control is considered in \cite{SERVAIS2014} and \cite{FREUDENTHALER2016} by using non-analytic solutions and a modified viscous Burger's equation, respectively. In \cite{Pilloni2016}, the authors proposed a boundary control law for a MAS, which is modeled as the heat equation, to achieve state consensus.

The main contributions of the paper is that we propose a framework which connects a ODE formation control protocol and a PDE model for the deployment of mobile agents onto arbitrary closed $\calC^2$ curves. Furthermore, in this framework we assume only leader measures its absolute position and use simple static output-feedback control. More precisely, the leader calculates its displacement with respect to the desired curve. 
Then the leader sends the value of its displacement to all the agents by using a communication network which results in time-varying delay due to sampling and communication \cite{fridman2014introduction}. The other agents, which are referred as followers, have access only to the local information of the desired curve and displacements with respect to their neighbors. Since the desired formation is a closed curve, the MAS is modeled as a diffusion equation with periodic boundary condition. The method used in this paper is based on \cite{FRIDMAN2012} and \cite{SELIVANOV2018} which deal with Dirichlet and mixed boundary conditions. We derive linear matrix inequality (LMI) conditions with arbitrary delay for desired convergence rate. Compared to the ODE MAS with communication delay, e.g., \cite{Li2013}, the LMI conditions derived in this paper are simpler with lower dimension, and they are always feasible.


The paper is organized as follows. In Section \ref{s:Preliminaries}, some useful inequalities are recalled. The MAS deployment problem using sampled control is formulated in Section \ref{s:prob-formulation}. The main results are included in Section \ref{s:periodic-nodelay} and \ref{s:periodic-delay}. In Section \ref{s:periodic-nodelay}, we derive LMI conditions to guarantee the deployment on the closed $\calC^2$ curve for the desired decay rate without communication delay. In Section \ref{s:periodic-delay}, the similar type of the result is obtained for the case with delay. Simulations are presented in Section \ref{s:simulations}. The paper is concluded in Section \ref{s:conclusion}.

\textbf{Notations.} With $\R_{\geq 0}$ we denote the set of  non-negative real numbers, respectively.  
$\calL_2(a,b)$ is the Hilbert space of square integrable functions $\phi(\xi),\xi\in[a,b]$ with the corresponding norm given as $\|\phi\|_{\calL_2} = \sqrt{\int_a^b z^2 d\xi}$. $\calH^1(a,b)$ is the Sobolev space of absolutely continuous scalar functions $\phi:[a,b]\rightarrow\R$ with $\frac{d\phi}{d \xi}\in \calL_2(a,b)$ 
. 
$\calH^2(a,b)$ is the Sobolev space of scalar functions $\phi:[a,b]\rightarrow \R$ with absolutely continuous $\frac{d\phi}{d\xi}$ and with $\frac{d^2\phi}{d\xi^2}\in \calL_2(a,b)$.


\section{Preliminaries}\label{s:Preliminaries}

\begin{lemma}[Wirtinger's inequality \cite{hardy1952inequalities}]\label{e:Wirtinger}
	For $f\in \calH^1(a,b)$, 
	\begin{equation*}
	\|f\|\le\frac{2(b-a)}{\pi}\|f'\|\quad\text{if $f(a)=0$ or $f(b)=0$.}		
	\end{equation*}
\end{lemma}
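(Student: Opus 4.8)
The statement is the classical Wirtinger (Poincar\'e-type) inequality, so a bare appeal to \cite{hardy1952inequalities} already settles it; nevertheless, here is the argument I would spell out. The plan is to peel off the inessential data by changes of variable and then display the extremal function $\sin$ explicitly. First, the reflection $\xi\mapsto a+b-\xi$ leaves both $\|f\|$ and $\|f'\|$ unchanged and interchanges the two hypotheses $f(a)=0$ and $f(b)=0$, so it suffices to treat $f(a)=0$. Next, the affine substitution $\xi=a+\frac{2(b-a)}{\pi}\theta$ sends $[a,b]$ to $[0,\pi/2]$; with $h(\theta)=f\!\left(a+\frac{2(b-a)}{\pi}\theta\right)$ one finds $\|f\|^{2}=\frac{2(b-a)}{\pi}\int_{0}^{\pi/2}h^{2}\,d\theta$ and $\|f'\|^{2}=\frac{\pi}{2(b-a)}\int_{0}^{\pi/2}(h')^{2}\,d\theta$, so the inequality to prove is equivalent to $\int_{0}^{\pi/2}h^{2}\,d\theta\le\int_{0}^{\pi/2}(h')^{2}\,d\theta$ for every $h\in\calH^{1}(0,\pi/2)$ with $h(0)=0$.

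For this reduced inequality I would write $h(\theta)=u(\theta)\sin\theta$, so that $h'=u'\sin\theta+u\cos\theta$ and
\[
(h')^{2}=(u')^{2}\sin^{2}\theta+(u^{2})'\sin\theta\cos\theta+u^{2}\cos^{2}\theta .
\]
Integrating the middle term over $[0,\pi/2]$ by parts, the boundary values vanish --- at $\theta=\pi/2$ because $\cos\theta=0$, and at $\theta=0$ because $h$ is absolutely continuous with $h(0)=0$, so $|h(\theta)|\le\sqrt{\theta}\,\|h'\|_{\calL_{2}(0,\theta)}$ and hence $u^{2}\sin\theta\cos\theta=O(h^{2}/\theta)\to0$ --- and the integrated middle term equals $-\int_{0}^{\pi/2}u^{2}(\cos^{2}\theta-\sin^{2}\theta)\,d\theta$. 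Collecting terms and using $\int u^{2}\sin^{2}\theta\,d\theta=\int h^{2}\,d\theta$ yields $\int_{0}^{\pi/2}(h')^{2}\,d\theta=\int_{0}^{\pi/2}(u')^{2}\sin^{2}\theta\,d\theta+\int_{0}^{\pi/2}h^{2}\,d\theta$, i.e. $\int_{0}^{\pi/2}[(h')^{2}-h^{2}]\,d\theta=\int_{0}^{\pi/2}(u')^{2}\sin^{2}\theta\,d\theta\ge0$, which is exactly the claim (with equality iff $u$ is constant, i.e. $f(\xi)\propto\sin\frac{\pi(\xi-a)}{2(b-a)}$).

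The one delicate point --- and what I expect to be the main obstacle --- is the legitimacy of the substitution $u=h/\sin\theta$ near $\theta=0$, where the denominator vanishes: one has to know that $u$ is locally absolutely continuous on $(0,\pi/2]$ and that $u^{2}\sin^{2}\theta$ and $(u')^{2}\sin^{2}\theta$ are integrable down to the origin. I would dispose of this either by first establishing the reduced inequality for $h\in\calC^{1}([0,\pi/2])$ with $h(0)=0$, where no singularity arises, and then invoking density of such $h$ in $\{h\in\calH^{1}(0,\pi/2):h(0)=0\}$; or by citing the sharp Hardy bound $\int_{0}^{\pi/2}h^{2}/\theta^{2}\,d\theta\le4\int_{0}^{\pi/2}(h')^{2}\,d\theta$, which pins down the behaviour of $u$ at $0$. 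A completely different route, kept in reserve, is to reflect $f$ evenly about $b$ onto an interval of length $2(b-a)$ on which the extended function vanishes at both endpoints and then expand in a Fourier sine series; but the change-of-variables computation above is shorter and keeps the extremizer in plain view.
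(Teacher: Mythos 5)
The paper offers no proof of this lemma at all --- it is stated as a classical fact and attributed to Hardy--Littlewood--P\'olya --- so there is no in-paper argument to compare yours against. Your spelled-out proof is correct: the reflection and affine rescaling reduce the claim to $\int_0^{\pi/2}h^2\,d\theta\le\int_0^{\pi/2}(h')^2\,d\theta$ for $h\in\calH^1(0,\pi/2)$ with $h(0)=0$, and the substitution $h=u\sin\theta$ together with the integration by parts of the cross term gives the identity $\int_0^{\pi/2}\bigl[(h')^2-h^2\bigr]d\theta=\int_0^{\pi/2}(u')^2\sin^2\theta\,d\theta\ge 0$, with the correct extremizer. You also correctly identify and handle the only delicate point, the behaviour of $u=h/\sin\theta$ at the origin: the bound $|h(\theta)|\le\sqrt{\theta}\,\|h'\|_{\calL_2(0,\theta)}$ kills the boundary term, and either the density argument or the Hardy inequality legitimizes the manipulation for general $\calH^1$ functions. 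Note that the one-endpoint (``quarter-period'') version with constant $2(b-a)/\pi$ that you prove is exactly the form the paper needs later, when it is applied on $[0,\pi]$ and $[\pi,2\pi]$ to $\sigma=e(\cdot,t)-e(\pi,t)$, which vanishes only at $x=\pi$; your even-reflection-plus-Fourier fallback would deliver the same constant. Nothing further is required.
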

\begin{lemma}[Halanay's inequality, \cite{halanay1966differential,fridman2014introduction}]
Let $0< \delta_1 < 2 \delta_0$ and let $V:[t_0-\tau_M,\infty)\rightarrow [0,\infty)$ be an absolutely continuous function that satisfies 
\begin{align}\label{e:halanay-1}
\dot{V}(t) \leq -2\delta_0 V(t) + \delta_1 \sup_{-\tau_M\leq \theta \leq 0} V(t+\theta), \quad t\geq t_0.
\end{align}
Then 
\begin{align}
V(t) \leq e^{-2\delta (t-t_0)} \sup_{-\tau_M\leq \theta \leq 0} V(t_0+\theta), \quad t\geq t_0,
\end{align}
where $\delta>0$ is the unique positive solution of 
\begin{align}\label{e:halanay-alpha}
\delta = \delta_0 - \frac{\delta_1 e^{2\delta \tau_M}}{2}.
\end{align}
\end{lemma}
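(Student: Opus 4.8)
The plan is the classical comparison (barrier) argument. First I would settle the well-posedness of the constant $\delta$: the function $\varphi(\delta):=\delta-\delta_0+\frac{\delta_1}{2}e^{2\delta\tau_M}$ is continuous on $[0,\infty)$, strictly increasing since $\varphi'(\delta)=1+\delta_1\tau_M e^{2\delta\tau_M}>0$, satisfies $\varphi(0)=\frac{\delta_1}{2}-\delta_0<0$ by the hypothesis $\delta_1<2\delta_0$, and tends to $+\infty$ as $\delta\to\infty$. Hence \eqref{e:halanay-alpha} has a unique positive root $\delta$, and moreover $0<\delta<\delta_0$.

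Write $M:=\sup_{-\tau_M\le\theta\le0}V(t_0+\theta)$, fix $\epsilon>0$, and introduce the barrier $W(t):=(M+\epsilon)e^{-2\delta(t-t_0)}$ for $t\ge t_0-\tau_M$. I would prove $V(t)<W(t)$ for every $t\ge t_0$ and then let $\epsilon\downarrow0$. On $[t_0-\tau_M,t_0]$ this is clear, because $e^{-2\delta\theta}\ge1$ for $\theta\le0$ gives $W(t_0+\theta)\ge M+\epsilon>V(t_0+\theta)$. Suppose, for contradiction, that the set $\{t>t_0:V(t)\ge W(t)\}$ is nonempty and let $t^*$ be its infimum; by continuity of $V-W$ one has $t^*>t_0$, $V<W$ on $[t_0-\tau_M,t^*)$, and $V(t^*)=W(t^*)$.

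The heart of the argument is the differential comparison on $[t_0,t^*]$. Because $W$ is nonincreasing, for every $t\in[t_0,t^*)$ we have $\sup_{-\tau_M\le\theta\le0}V(t+\theta)\le\sup_{-\tau_M\le\theta\le0}W(t+\theta)=W(t)e^{2\delta\tau_M}$, using $V<W$ on $[t_0-\tau_M,t^*)$. Substituting into \eqref{e:halanay-1}, and using $\dot W(t)=-2\delta W(t)$ together with the identity $-2\delta=-2\delta_0+\delta_1e^{2\delta\tau_M}$ that defines $\delta$ in \eqref{e:halanay-alpha}, we obtain for almost every $t\in[t_0,t^*]$
\[
\frac{d}{dt}\bigl(V(t)-W(t)\bigr)\le -2\delta_0\bigl(V(t)-W(t)\bigr).
\]
Since $V-W$ is absolutely continuous ($V$ is, and $W$ is smooth), multiplying by $e^{2\delta_0 t}$ shows that $t\mapsto e^{2\delta_0 t}\bigl(V(t)-W(t)\bigr)$ is nonincreasing on $[t_0,t^*]$; hence $V(t^*)-W(t^*)\le e^{-2\delta_0(t^*-t_0)}\bigl(V(t_0)-W(t_0)\bigr)<0$, contradicting $V(t^*)=W(t^*)$. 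Therefore $V(t)<W(t)=(M+\epsilon)e^{-2\delta(t-t_0)}$ for all $t\ge t_0$, and letting $\epsilon\downarrow0$ yields the asserted bound.

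I expect the only delicate points to be: (i) the passage from the delayed term $\sup_{[t-\tau_M,t]}V$ to $W(t)e^{2\delta\tau_M}$ --- this is precisely where \eqref{e:halanay-alpha} is calibrated so that $W$ is a supersolution of \eqref{e:halanay-1}, and it is the conceptual crux; and (ii) the measure-theoretic bookkeeping, since \eqref{e:halanay-1} holds only almost everywhere, so the final step must go through the integral (Gr\"onwall) form rather than a pointwise extremum argument at $t^*$. Everything else is routine.
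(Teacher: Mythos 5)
The paper does not prove this lemma --- it is stated as a cited preliminary result from \cite{halanay1966differential,fridman2014introduction} --- so there is no in-paper proof to compare against. Your barrier/comparison argument is correct (the monotonicity of $\varphi$, the strict inequality at $t_0$ forcing $t^*>t_0$, the estimate $\sup_{\theta}V(t+\theta)\le W(t)e^{2\delta\tau_M}$, and the Gr\"onwall step handling the a.e.\ differential inequality are all in order) and is essentially the standard textbook proof of Halanay's inequality found in the cited references.
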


\section{Problem formulation}\label{s:prob-formulation}
We consider $N$ agents in $\R^3$ governed by 
\begin{align}
\dot{z}_i=u_i,\quad i\in\{1,\ldots,N\}, 
\end{align}
where $z_i\in\R^3$ are the states and $u_i\in\R^3$ are the control inputs. The goal is to deploy the agents on a given closed $\calC^2$ curve $\gamma\colon[0,2\pi]\rightarrow \R^3$.

Let us denote the ring graph with $N$ vertices as $\calG_\ell=\{\calV,\calE\}$, where $\calV=\{v_1,\ldots,v_N\}$ is the vertex set and $\calE=\{(v_i,v_{i+1}), i = 1,\ldots,N-1\}\cup\{(v_N,v_1)\}$ is the edge set.  As a typical formation control procedure, one assigns $N$ points on the curve, denoted as $\gamma(h),\ldots,\gamma(Nh)$, where $h=2\pi/N$. Consider the following displacement-based formation control protocol 
\begin{equation}\label{e:displacement}
\begin{aligned}
\dot{z}_i (t) = & a\frac{(z_{i-1}(t) - z_i(t))+(z_{i+1}(t) - z_i(t))}{h^2} \\
& - a\frac{(\gamma((i-1)h) -\gamma(ih))+(\gamma((i+1)h) - \gamma(ih))}{h^2}\\
& i = 1,\ldots, N.
\end{aligned}
\end{equation}
where $z_0 = z_N, z_{N+1}=z_1$, $z_i\in\R^3$ is the position of the agent $v_i$, and $a>0$, guarantees that all agents converge to the formation 
\begin{align}
E:=\{(z_1,\ldots,z_N) \mid z_i-z_j = \gamma(ih)-\gamma(jh) \},
\end{align} 
which is the desired curve up to constant translations \cite{oh2015survey}.

\begin{remark}
The implementation of the system \eqref{e:displacement} includes firstly the agents align the local coordination system, then the agent compare the displacement (to its neighbors) with respect to the desired displacement continuously. It can be proved that the formation of the agents converges to the formation given by desired displacements asymptotically up to a constant translation \cite{oh2015survey}.
\end{remark}

As suggested in \cite{Trecate2006},  when $N$ is large, the model \eqref{e:displacement} is an approximation of  
\begin{align}\label{e:displacement_PDE}
z_t(x, t) = a (z_{xx}(x,t) - \gamma_{xx}(x)).
\end{align}
By denoting the error $e(x,t)=z(x,t)-\gamma(x)$, the error dynamic of \eqref{e:displacement_PDE} is given as the following heat equation 
\begin{align}\label{e:error-dyn-boundarycontrol}
e_t(x,t) = a e_{xx}(x,t), \quad x\in[0,2\pi]. 
\end{align}  
Notice that the components of $e(x,t)\in\R^3$ are decoupled.

It can be seen that system \eqref{e:displacement_PDE} cannot drive the agents onto the desired curve $\gamma$, but up to a constant translation. In fact,  $z^*=\gamma+c$ is an equilibrium of system \eqref{e:displacement_PDE} for any constant $c$. This is consistent with the displacement-based formation control in \cite{oh2015survey}. In order to solve this problem, we shall employ additional control input to guarantee the convergence to the desired curve. More precisely, we assign leader agents who can measure the absolute positions of themselves and of their targets. 

%

Since the desired curve $\gamma$ is closed and is $\calC^2$, it is natural to consider the multi-agent system with periodic boundary condition 
\begin{equation}\label{e:periodic}
\begin{aligned}
z(0,t) & = z(2\pi,t) \\
z_{x}(0,t) & = z_{x}(2\pi,t).
\end{aligned}
\end{equation}
Furthermore, we assume, without loss of generality, that the leader is located at $x=\pi$
and it can measure $z(\pi,t)-\gamma(\pi)$ and send this information to the other agents through a communication network which results in a bounded time-varying delay.
The closed-loop system is given as 
\begin{align}\label{e:displacement-pde-delay-pre}
z_t = a (z_{xx} - \gamma_{xx}) - K(z(\pi,t_k-\eta_k) & -\gamma(\pi)), 
\end{align}
where $t\in[t_k,t_{k+1})$, $a>0$, $K>0$, $t_k$ is the updating time of the controller, and $\eta_k$ is the network-induced delay. Parameters $a$ and $K$ are the control gains. By using the time-delay approach to networked control systems \cite[Chapter~7]{fridman2014introduction}, we denote $\tau(t)=t-t_k+\eta_k$. Then the system \eqref{e:displacement-pde-delay-pre} can be presented as
\begin{align}\label{e:displacement-pde-delay}
z_t = a (z_{xx} - \gamma_{xx}) - K(z(\pi,t-\tau(t))-\gamma(\pi)).
\end{align}
Here $\tau(t)\leq \tau_M$, where $\tau_M$ is the sum of the maximum transmission interval and maximum network-induced delay.
We shall refer to \eqref{e:displacement-pde-delay} with boundary condition \eqref{e:periodic} as the system with periodic boundary condition. In this paper, we set $t_0=0$.
In this case, the error dynamics are given as 
\begin{align}
e_t & = a e_{xx}-K e(\pi,t-\tau(t)) \label{e:dynamic+delay} \\
& = a e_{xx} - K [e(x,t-\tau(t)) - \int_{\pi}^{x} e_\zeta(\zeta,t-\tau(t))d\zeta]. \nonumber
\end{align}
with boundary condition
\begin{equation}
\begin{aligned}
e(0,t) & = e(2\pi,t) \\
e_{x}(0,t) & = e_{x}(2\pi,t).
\end{aligned}\label{e:periodic_e}
\end{equation}
Consider the initial condition for \eqref{e:dynamic+delay}, \eqref{e:periodic_e} as
\begin{align}
e(x,t)=e(x,0), t<0. \label{e:initial-cond}
\end{align}
The stability of this system will be analyzed in Section \ref{s:periodic-delay}.

By defining 
\begin{align*}
X = & \{ w\in\calH^1(0,2\pi) \mid w(0,t) = w(2\pi,t) \},
\end{align*}
the existence and uniqueness of the strong solution of system \eqref{e:dynamic+delay} with periodic boundary condition \eqref{e:periodic_e} is guaranteed by the arguments in \cite{SELIVANOV2018}, for the initial conditions $e(\cdot,0) \in X$.

In this paper, we design sufficient conditions for the system \eqref{e:displacement-pde-delay}, with delay bound $\tau_M$, to achieve exponential stabilization (with any desirable decay rate for small enough $\tau_M\geq 0$).

In the following two sections, we consider the cases without (i.e., $\tau(t)=0$) and with delay in the communication channel, respectively. For both cases, we derive LMI conditions for desired decay rate with given system parameters $a, K$, and $\tau_M$ (the case with delay).

\section{PDE-based deployment}\label{s:periodic-nodelay}

Firstly, we shall consider the sampled-data controller without delay in this subsection, i.e., 
\begin{align}\label{e:displacement-pde-nodelay}
z_t = a(z_{xx}(x,t) - \gamma_{xx}) - K(z(\pi,t)-\gamma(\pi))
\end{align}
with periodic boundary condition \eqref{e:periodic}. In this case the dynamic of the error $e = z(x,t)-\gamma(x)$ is given as 
\begin{align}\label{e:errordyn-nodelay}
e_t = a e_{xx} - K e(\pi,t).
\end{align}
Similarly to the previous section, we assume $e\in\R$.

\begin{theorem}\label{th:periodic-nodelay}

For any $\delta>0$, let $K > \delta$ and $a \geq \frac{K^2}{K-\delta}$. Then the system \eqref{e:errordyn-nodelay}, \eqref{e:periodic_e} is exponentially stable with the decay rate $\delta$ in the $\calH^1$-norm:  
\begin{equation}
\exists\,C>0\colon\quad \|e(\cdot,t)\|_{\calH^1}\le Ce^{-2\delta t}\|e(\cdot,0)\|_{\calH^1}. 
\end{equation}

\end{theorem}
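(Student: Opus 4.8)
The plan is to establish the decay through a Lyapunov functional equivalent to the squared $\calH^1$-norm and to push the whole gain condition into one scalar inequality in $a,K,\delta$. I would use $V(t)=\int_0^{2\pi}e^2(x,t)\,dx+\int_0^{2\pi}e_x^2(x,t)\,dx$, which equals $\|e(\cdot,t)\|_{\calH^1}^2$; below $\|\cdot\|$ denotes the $\calL_2(0,2\pi)$-norm. The first move is the reformulation already used for \eqref{e:dynamic+delay} with $\tau\equiv0$: writing $e(\pi,t)=e(x,t)-\int_\pi^x e_\zeta(\zeta,t)\,d\zeta$ and setting $f(x,t):=e(x,t)-e(\pi,t)$ (so that $f(\pi,t)=0$ and $f_x=e_x$), the error equation \eqref{e:errordyn-nodelay} reads $e_t=ae_{xx}-Ke+Kf$.

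Next I would differentiate $V$ along solutions and integrate by parts. All boundary terms vanish by the periodic conditions \eqref{e:periodic_e}; for the gradient term one also uses that $e_{xx}$ is periodic (since $ae_{xx}=e_t+Ke(\pi,t)$ with $e_t$ periodic) and $\int_0^{2\pi}e_{xx}\,dx=0$. This gives $\frac{d}{dt}\int_0^{2\pi}e^2\,dx=-2a\|e_x\|^2-2K\|e\|^2+2K\int_0^{2\pi}ef\,dx$ and $\frac{d}{dt}\int_0^{2\pi}e_x^2\,dx=-2a\|e_{xx}\|^2$. The delicate term is $2K\int_0^{2\pi}ef\,dx$: since $f$ vanishes at the interior point $x=\pi$, I would apply Wirtinger's inequality (Lemma \ref{e:Wirtinger}) on $[0,\pi]$ and on $[\pi,2\pi]$ separately — on each the constant is $2(b-a)/\pi=2$ — and add the squared bounds to obtain $\|f(\cdot,t)\|\le 2\|e_x(\cdot,t)\|$. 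Hence $\left|2K\int_0^{2\pi}ef\,dx\right|\le 4K\|e\|\,\|e_x\|$, and Young's inequality with weight $2a$ yields $4K\|e\|\,\|e_x\|\le 2a\|e_x\|^2+\tfrac{2K^2}{a}\|e\|^2$, which exactly cancels the dissipation and leaves $\frac{d}{dt}\int_0^{2\pi}e^2\,dx\le\bigl(\tfrac{2K^2}{a}-2K\bigr)\|e\|^2=-\tfrac{2K(a-K)}{a}\|e\|^2$. The hypotheses $K>\delta$ and $a\ge K^2/(K-\delta)$ are precisely equivalent to $K(a-K)/a\ge\delta$, so this is $\le-2\delta\|e\|^2$.

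For the gradient part I would note that \eqref{e:periodic_e} forces $\int_0^{2\pi}e_x(x,t)\,dx=e(2\pi,t)-e(0,t)=0$, so $e_x(\cdot,t)$ is mean-free and periodic; the Poincaré inequality for mean-free functions on an interval of length $2\pi$ gives $\|e_x\|\le\|e_{xx}\|$, and since $a\ge K^2/(K-\delta)\ge 4\delta$ (the minimum of $K\mapsto K^2/(K-\delta)$ on $(\delta,\infty)$ is $4\delta$, at $K=2\delta$) one gets $\frac{d}{dt}\int_0^{2\pi}e_x^2\,dx=-2a\|e_{xx}\|^2\le-2a\|e_x\|^2\le-2\delta\|e_x\|^2$. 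Summing the two estimates, $\dot V\le-2\delta V$, hence $V(t)\le e^{-2\delta t}V(0)$, which is the asserted bound; the computations are legitimate for the strong solutions recalled earlier and extend to arbitrary $\calH^1$ data by density. (Alternatively, the $\calH^1$-decay of $e$ follows from its $\calL_2$-decay by parabolic smoothing, at the cost of a different constant $C$.)

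The step I expect to be the main obstacle is the pointwise feedback $Ke(\pi,t)$: it is not coercive on its own and ties the whole profile to a single value, so it has to be "distributed" over the spatial domain. Trading $e(\pi,t)$ for $e(x,t)-\int_\pi^x e_\zeta\,d\zeta$ and controlling the remainder by Wirtinger's inequality on the two arcs of the ring on either side of the leader's position $x=\pi$ is exactly what renders the closed loop dissipative, and the optimal choice of the Young weight is what yields the clean threshold $a\ge K^2/(K-\delta)$ with essentially no slack.
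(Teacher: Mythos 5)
Your proof is correct and reaches exactly the paper's threshold $a\ge K^2/(K-\delta)$, but it closes the estimates differently. The paper keeps a free weight $q$ in $V=\|e\|^2+q\|e_x\|^2$, collects all terms (including the cross terms $\int e\sigma$ and $\int e_{xx}\sigma$) into a single quadratic form in $(e,e_{xx},\sigma)$ plus an S-procedure multiplier $\lambda$ for the Wirtinger bound $\|\sigma\|^2\le4\|e_x\|^2$, and then eliminates $\lambda$ and $q$ by Schur complements; the gain condition emerges from feasibility of that $3\times3$ LMI. You instead fix $q=1$, decouple the two halves of $V$, spend the entire diffusion term $-2a\|e_x\|^2$ on the cross term via Young's inequality (which is in effect the same trade the paper makes when it sets the leftover $\|e_x\|^2$ coefficient to zero by its choice of $\lambda$), and handle the gradient half separately through the identity $\frac{d}{dt}\|e_x\|^2=-2a\|e_{xx}\|^2$ together with the \emph{periodic, mean-zero} Poincar\'e--Wirtinger inequality $\|e_x\|\le\|e_{xx}\|$ and the observation $a\ge K^2/(K-\delta)\ge4\delta$. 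That last inequality (constant $1$ on an interval of length $2\pi$ for periodic functions with zero mean) is not Lemma~\ref{e:Wirtinger} as stated in the paper, so you should state and justify it (e.g.\ by Fourier series); with the paper's one-sided Wirtinger lemma alone you would only get $\|e_x\|\le4\|e_{xx}\|$, which still suffices here only if $a\ge16\delta$, so the sharp periodic constant genuinely matters for recovering the clean condition. What your route buys is an LMI-free, fully scalar argument with $C=1$ and a transparent interpretation of where the threshold comes from; what the paper's route buys is a template (free $q$, $\lambda$, Schur complements) that generalizes directly to the delayed case of Theorem~\ref{th:periodic-delay}.
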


\begin{proof}
Since the components of $e$ are decoupled, here we prove the case that $e:[0,2\pi]\times \R_{\geq 0}\rightarrow \R$.
Consider the Lyapunov functional
\begin{align}\label{e:Lyapunov-nodelay}
V(t) =  \int_{0}^{2\pi} e^2(x,t) dx + q  \int_{0}^{2\pi} e^2_x(x,t) dx.
\end{align}
Let $\sigma:=e(x,t)-e(\pi,t)$. Then the system \eqref{e:errordyn-nodelay} can be written as 
\begin{align}
e_t = a e_{xx} - K e + K\sigma.
\end{align}
Then the time derivative of $V$ is given as 
\begin{align*}
\dot{V} = & 2 \int_0^{2\pi}e(a e_{xx} - K e + K\sigma) dx \\
& - 2q \int_{0}^{2\pi}e_{xx}(a e_{xx} - K e + K\sigma) dx \\
= & -2a\|e_x\|^2 - 2K\|e\|^2 + 2 K \int_{0}^{2\pi}e\sigma dx  \\
& - 2qa\|e_{xx}\|^2 - 2Kq\|e_x\|^2 - 2qK\int_0^{2\pi}e_{xx}\sigma dx,
\end{align*}
where we used the integral by parts.
By Lemma \ref{e:Wirtinger}, we have $\|\sigma\|^2 \leq 4 \|e_x\|^2$ which implies 
\begin{align}
0\leq -\lambda \|\sigma\|^2 + 4\lambda\|e_x\|^2, \quad \forall \lambda\geq 0.
\end{align}
Hence,
\begin{align*}
& \dot{V} + 2\delta V \\
\leq & -2a\|e_x\|^2 - 2K\|e\|^2 + 2 K \int_{0}^{2\pi}e\sigma dx  \\
& - 2qa\|e_{xx}\|^2 - 2Kq\|e_x\|^2 - 2qK\int_0^{2\pi}e_{xx}\sigma dx \\
& -\lambda \|\sigma\|^2 + 4\lambda\|e_x\|^2 + 2\delta\|e\|^2 + 2\delta q \|e_x\|^2 \\
= & \int_{0}^{2\pi} \eta^\top \Phi \eta dx + (4\lambda+2\delta q -2a-2Kq)\|e_x\|^2
\end{align*}
where $\eta = [e, e_{xx}, \sigma]^\top$ and  
\begin{align}
\Phi = \begin{bmatrix}
- 2K + 2\delta  &  0 &  K  \\
* & -2 qa & -q K  \\
*  & * & - \lambda  \\
\end{bmatrix}.
\end{align}
We have $\dot{V}+2\delta V\leq 0$ if $\Phi\leq 0$. By using Schur complement, $\Phi\leq 0$ if $K > \delta$ and
\begin{align*}
-\lambda + \begin{bmatrix}
K & -qK
\end{bmatrix}
\begin{bmatrix}
\frac{1}{2K-2\delta} & 0 \\ 0 & \frac{1}{2qa}
\end{bmatrix}
\begin{bmatrix}
K \\ -qK
\end{bmatrix}\leq 0.
\end{align*}
Taking $\lambda = \frac{a+Kq-\delta q}{2}$,  the last inequality is equivalent to 
\begin{align*}
\left(\frac{K^2}{a}-(K-\delta) \right) q \leq a - \frac{K^2}{K-\delta}. 
\end{align*}
Such $q$ exists if and only if $a \geq \frac{K^2}{K-\delta}$. Then $\dot{V}\leq -2\delta V$, which implies the exponential stability in the $\calH^1$-norm.

\end{proof}

\begin{remark}
If there are several leaders (e.g. at $\pi/2$ and $3/2 \pi$), then in \eqref{e:displacement-pde-nodelay}, we can use 
$-K(z(\pi/2,t)-\gamma (\pi/2))$ for $z\in[0,\pi)$,
and $-K(z(1.5 \pi,t)-\gamma (1.5\pi))$ for $z\in[\pi, 2\pi]$,
that allows to reduce the gain $a$ \cite{FRIDMAN2012}.
\end{remark}

%

\section{Network-based deployment}\label{s:periodic-delay}

In this subsection, we consider the stability of system \eqref{e:displacement-pde-delay} with bounded delay
and periodic boundary condition \eqref{e:periodic}. 

The main result is given as follows.

\begin{theorem}\label{th:periodic-delay}
Consider the boundary-value problem \eqref{e:dynamic+delay}, \eqref{e:periodic_e}. Given positive scalars $\delta_0,\tau_M, K$ and $\delta_1$ satisfying $\delta_1 < 2 \delta_0$, let there exist positive scalars $p_1,p_2,p_3,r,g$ and $s$ satisfying the following LMIs 
\begin{align}\label{e:LMI-delay}
  \delta_0 p_3 \leq p_2, \quad \Phi \leq 0, \quad
  \begin{bmatrix}
  r & s \\ s & r
  \end{bmatrix}\geq 0,
\end{align}
where $\Phi=\{\Phi_{ij}\}$ with
\begin{align*}
\Phi_{11} & = g + 2 \delta_0 p_1 - e^{-2 \delta_0 \tau_M} r, & \Phi_{12} & = p_1 - p_2, \\
\Phi_{14} & = -p_2 K + e^{-2 \delta_0 \tau_M} (r-s), &
\Phi_{13} & = e^{-2 \delta_0 \tau_M} s,\\
\Phi_{15} & = p_2 K, &
\Phi_{22} & = \tau_M^2 r -2 p_3, \\
\Phi_{24} & = -p_3 K, &
\Phi_{25} & = p_3 K, \\
\Phi_{33} & = -g e^{-2 \delta_0 \tau_M} -e^{-2 \delta_0 \tau_M} r, &
\Phi_{34} & = e^{-2 \delta_0 \tau_M} (r-s), \\
\Phi_{44} & = -2 e^{-2 \delta_0 \tau_M} (r-s) -\delta_1 p_1, &
\Phi_{55} & = -\frac{\delta_1 p_3 a}{4}.
\end{align*}
Then the system \eqref{e:dynamic+delay}, initialized with $e(x,t) = e(x,0)\in X, \forall t<0$, is exponentially stable with a decay rate $\delta$, where $\delta$ is the unique solution to \eqref{e:halanay-alpha}, in the $\calH^1$-norm:
\begin{align}
\exists\,C>0\colon\quad  \|e(\cdot,t)\|_{\calH^1} \leq Ce^{-2\delta t}\|e(\cdot,0)\|_{\calH^1} \label{e:convergence-inequality-delay}. 
\end{align}
 Moreover, given any $\delta>0$ and $K>\delta_0$, LMIs \eqref{e:LMI-delay} are always feasible for large enough $a$.
\end{theorem}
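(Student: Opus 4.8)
The plan is to prove the main implication (feasibility of \eqref{e:LMI-delay} $\Rightarrow$ exponential $\calH^1$-stability) by a Lyapunov--Krasovskii argument that reduces everything to the Halanay inequality \eqref{e:halanay-1}, and then to settle the closing ``Moreover'' sentence by a Schur-complement feasibility argument. As in Theorem \ref{th:periodic-nodelay}, I first use that the three components of $e$ are decoupled to reduce to scalar $e\colon[0,2\pi]\times\R_{\ge0}\to\R$, and I rewrite the second form of \eqref{e:dynamic+delay} as $e_t = a e_{xx} - K e(x,t-\tau(t)) + K\sigma$, where the Wirtinger remainder $\sigma(x,t):=e(x,t-\tau(t))-e(\pi,t-\tau(t))=\int_\pi^x e_\zeta(\zeta,t-\tau(t))\,d\zeta$ vanishes at $x=\pi$, so that Lemma \ref{e:Wirtinger} applies to it.

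I would take the functional $V(t)=p_1\|e\|^2 + a p_3\|e_x\|^2 + V_R + V_g$, with $V_R=\tau_M r\int_{-\tau_M}^0\int_{t+\theta}^t e^{2\delta_0(s-t)}\|e_s(\cdot,s)\|^2\,ds\,d\theta$ and $V_g=g\int_{t-\tau_M}^t e^{2\delta_0(s-t)}\|e(\cdot,s)\|^2\,ds$, so that the instantaneous energy $p_1\|e\|^2+a p_3\|e_x\|^2$ is coercive in the $\calH^1$-norm while $V_R,V_g\ge0$. Differentiating along the solution and adding the descriptor identity $0=2\int_0^{2\pi}(p_2 e + p_3 e_t)(a e_{xx}-K e(x,t-\tau)+K\sigma - e_t)\,dx$, I integrate by parts and use the periodic boundary condition \eqref{e:periodic_e}, which kills all boundary terms and makes the $e_{xx}e_t$ contribution cancel the derivative of $a p_3\|e_x\|^2$. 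The only resulting term not expressible through $\eta:=[e,e_t,e(x,t-\tau_M),e(x,t-\tau),\sigma]^\top$ is a multiple of $\|e_x\|^2$; the exponential weights $e^{2\delta_0(\cdot-t)}$ are chosen precisely so that $\dot V_R+2\delta_0 V_R$ and $\dot V_g+2\delta_0 V_g$ shed their $-2\delta_0$ parts.

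The heart of the argument is to establish $\dot V\le -2\delta_0 V + \delta_1 W$ with $W:=\sup_{-\tau_M\le\theta\le0}V(t+\theta)$, which is exactly \eqref{e:halanay-1}. I bound $\delta_1 W$ from below by $\delta_1[p_1\|e(\cdot,t-\tau)\|^2 + a p_3\|e_x(\cdot,t-\tau)\|^2]$ (since $W\ge V(t-\tau)$ dominates its own instantaneous energy), and then invoke Lemma \ref{e:Wirtinger} on $\sigma$ to get $\|e_x(\cdot,t-\tau)\|^2\ge\tfrac14\|\sigma\|^2$; this is the mechanism producing the entries $-\delta_1 p_1$ in $\Phi_{44}$ and $-\tfrac{\delta_1 p_3 a}{4}$ in $\Phi_{55}$. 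For the single time-integral $-\tau_M r\int_{t-\tau_M}^t e^{2\delta_0(s-t)}\|e_s\|^2\,ds$ arising from $\dot V_R$, I split at $s=t-\tau(t)$, apply Jensen's inequality on each piece to obtain $\|e-e(x,t-\tau)\|^2$ and $\|e(x,t-\tau)-e(x,t-\tau_M)\|^2$, and combine the two reciprocally weighted pieces with the reciprocally convex combination lemma, whose applicability is precisely $\begin{bmatrix} r & s\\ s & r\end{bmatrix}\ge0$ and which generates the off-diagonal $s$-entries. Collecting terms gives $\dot V + 2\delta_0 V - \delta_1 W \le \int_0^{2\pi}\eta^\top\Phi\eta\,dx + 2a(\delta_0 p_3 - p_2)\|e_x\|^2$; the first summand is $\le0$ by $\Phi\le0$ and the second is $\le0$ by $\delta_0 p_3\le p_2$. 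Halanay's inequality then yields $V(t)\le e^{-2\delta(t-t_0)}W(t_0)$ with $\delta$ the unique root of \eqref{e:halanay-alpha}, and coercivity of $V$ together with the fact that the constant history \eqref{e:initial-cond} makes $V_R(0)=0$ and $V(0)\le C'\|e(\cdot,0)\|_{\calH^1}^2$ converts this into \eqref{e:convergence-inequality-delay}.

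For the final sentence, fix $\delta_0,\delta_1,K,\tau_M$ with $\delta_1<2\delta_0$ and $K>\delta_0$. Among all entries of $\Phi$ only $\Phi_{55}=-\tfrac{\delta_1 p_3 a}{4}$ depends on $a$, so choosing $p_1,p_2,p_3,r,s,g$ independent of $a$, the Schur complement of $\Phi$ with respect to the $(5,5)$ block equals the fixed leading $4\times4$ block plus a correction of order $1/a$ that vanishes as $a\to\infty$. It therefore suffices to exhibit one $a$-independent choice (for instance $s=0$, $p_2=\delta_0 p_3$, $p_3>\tfrac12\tau_M^2 r$, with $p_1,g$ small and $r$ chosen so that $r e^{-2\delta_0\tau_M}>g+2\delta_0 p_1$) that makes the $4\times4$ block negative definite, which is possible exactly because $K>\delta_0$; then $\Phi\le0$ for all large $a$, while $\delta_0 p_3\le p_2$ and the $2\times2$ condition already hold. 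I expect the main obstacle to be the coupling of the temporal (delay) and spatial (diffusion) estimates: one must recognize that the leftover $\|e_x\|^2$ outside $\eta$ is absorbed only through $\delta_0 p_3\le p_2$, and that the spatial term $\sigma$ re-enters the decay estimate solely via the Halanay supremum of the gradient energy at the delayed time through Wirtinger's inequality (the $\Phi_{55}$ entry); getting the reciprocally convex bookkeeping to reproduce exactly the off-diagonal $s$-entries is the other place demanding care.
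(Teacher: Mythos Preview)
Your proposal is correct and follows essentially the same approach as the paper: the same Lyapunov--Krasovskii functional, the same descriptor identity, the same Jensen plus reciprocally convex (Park) bound producing the $s$-entries, the same use of Wirtinger on $\sigma$ to generate $\Phi_{55}$ (which the paper leaves implicit), and the same Halanay conclusion. For the ``Moreover'' clause the paper likewise takes the Schur complement with respect to the $(5,5)$ block, but instead of your direct parameter choices it identifies the resulting $4\times4$ block (with $\delta_1=0$) as the standard descriptor LMI for $\dot z(t)=-Kz(t-\tau)$ and invokes its known feasibility for $K>\delta_0$ and small $\tau_M$.
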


\begin{proof}
Consider the Lyapunov-Krasovskii functional 
\begin{align}
V(t) = & p_1\int_{0}^{2\pi} e^2(x,t) dx + p_3 \int_{0}^{2\pi} a e^2_{x}(x,t) dx \nonumber\\
& + \int_{0}^{2\pi}\bigg[ \tau_M r \int_{-\tau_M}^{0} \int_{t+\theta}^{t} e^{-2\delta_0(t-s)} e_s^2(x,s) ds d\theta \nonumber\\
& + g \int_{t-\tau_M}^{t} e^{-2\delta_0(t-s)} e^2(x,s) d s \bigg]dx. \label{e:Lyapunov-delay}
\end{align}
Notice that for the strong solution of \eqref{e:dynamic+delay}, the functional $V$ is well-defined and continuous.  
The time derivative of $V$ is given as 
\begin{align}
 \dot{V} + 2 \delta_0 V  
= & 2\delta_0 p_1  \int_{0}^{2\pi} e^2(x,t) + 2\delta_0 p_3 a \int_{0}^{2\pi} e^2_{x}(x,t) dx \nonumber\\
& + 2 p_1  \int_{0}^{2\pi} e_t e dx + 2 p_3 a \int_{0}^{2\pi} e_x e_{tx} dx \nonumber\\
& + g \int_{0}^{2\pi}[e^2(x,t)-e^{-2\delta_0\tau_M}e^2(x,t-\tau_M)]dx \nonumber\\
& -\tau_M r  \int_{0}^{2\pi}  \int_{t-\tau_M}^{t} e^{2\delta_0 (s-t)} e_s^2(x,s) ds dx \nonumber\\
& + \tau_M^2 r  \int_{0}^{2\pi} e^2_t(x,t) dx.\label{e:time-deri-V1}
\end{align}


By applying Jensen's inequality \cite[Proposition B.8]{gu2003stability} and further Park inequality (Lemma 1 in \cite{FRIDMAN2014}), we have 
\begin{align}
& -\tau_M r  \int_{0}^{2\pi}  \int_{t-\tau_M}^{t} e^{2\delta_0 (s-t)} e_s^2(x,s) ds dx \nonumber\\
\leq & -\frac{\tau_M}{\tau_M-\tau(t)}r e^{-2\delta_0 \tau_M} \int_{0}^{2\pi} [\int_{t-\tau_M}^{t-\tau(t)} e_s(x, s) ds]^2  dx \nonumber\\
& -\frac{\tau_M}{\tau(t)}r e^{-2\delta_0 \tau_M} \int_{0}^{2\pi} [\int_{t-\tau(t)}^{t} e_s(x, s) ds]^2  dx \nonumber\\
\leq & -e^{-2\delta_0 \tau_M} \int_{0}^{2\pi} \xi^\top 
\begin{bmatrix}
r & s \\ s & r
\end{bmatrix}
\xi dx  \label{e:time-deri-V2}
\end{align}
where $\xi^\top := [e(x, t)-e(x, t-\tau), e(x, t-\tau)-e(x, t-\tau_M)]$ and the parameter $s$ satisfies the last inequality of \eqref{e:LMI-delay}.

Due to \eqref{e:dynamic+delay}, we have 
\begin{align}
0 = & 2 \int_{0}^{2\pi} [p_2 e+p_3 e_t][ -e_t + a e_{xx} \nonumber \\
 & - K e(\pi,t-\tau(t)) ] dx. \label{e:D}
\end{align}
Using \eqref{e:time-deri-V2} in \eqref{e:time-deri-V1} and adding \eqref{e:D}, we find 
\begin{align*}
&\dot{V} + 2\delta_0 V - \delta_1 \sup_{\theta\in[-\tau_M,0]} V(t+\theta) \\
\leq & \dot{V} + 2\delta_0 V - \delta_1 V(t-\tau(t)) \\
\leq & \int_{0}^{2\pi} \varphi^\top \Phi \varphi dx - (2p_2 a - 2\delta_0 p_3 a) \int_{0}^{2\pi} e^2_x(x,t) dx
\end{align*}
with $\Phi$ given below \eqref{e:LMI-delay} and $\varphi^\top = [e(x,t),e_t(x,t),e(x,t-\tau_M),e(x,t-\tau),e(x,t-\tau)-e(\pi,t-\tau)]$.
This implies that, if $p_2\geq \delta_0 p_3$ and $\Phi \leq 0$, then \eqref{e:halanay-1} holds and 
Halanay's inequality implies that 
\begin{align}
V(t)\leq e^{-2\delta t} \sup_{\theta\in[-\tau_M,0]} V(\theta).
\end{align}
Finally, since the initial condition is set to be $e(x,t) = e(x,0), \forall t<0$, we have 
\begin{align}
& \sup_{\theta\in[-\tau_M,0]} V(\theta)\\
= & p_1\int_{0}^{2\pi} e^2(x,0) dx + p_3 \int_{0}^{2\pi} a e^2_{x}(x,0) dx  \nonumber\\
&  + g \int_{0}^{2\pi} \int_{-\tau_M}^{0} e^{2\delta_0 s} e^2(x,0) d s dx  \\
\leq & C\|e(\cdot,0)\|_{\calH^1}^2
\end{align}
where constant $C$ depends on the initial condition,
which implies the inequality \eqref{e:convergence-inequality-delay}. 

Now we show that the LMIs are feasible. Denote by $\Psi$ the matrix $\Phi$ with the deleted last column and row and $\delta_1=0$. Then $\Psi<0$ guarantees via the descriptor method that the system
$$\dot z(t)=-Kz(t-\tau)$$
is exponentially stable with a decay rate $\delta_0$ (cf. (4.23) in \cite{fridman2014introduction}). Moreover, given any $\delta_0>0$ and $K>\delta_0$ by arguments of \cite{fridman2014introduction} it can be shown that $\Psi<0$ is always feasible for small enough $\tau_M$.  Given any $\delta>0$ and choosing $\delta_1=0.1\delta$ and $\delta_0=\delta+2\delta_1$ and $K>\delta_0$, we find further $p_1,p_2,p_3,r$ and $s$ that solve $\Psi<0$ for small $\tau_M$. Then, applying Schur complements to the last column and row of $\Phi$, we conclude that $\Phi<0$ for large enough $a$. The latter implies that the system is exponentially stable with a decay rate $\delta$.

\end{proof}

\begin{remark}
For $\phi \in\calH^1(0,l)$, we have \cite{Kang2018}
\begin{align*}
\max_{x\in[0,l]} \phi^2(x) \leq 2\int_0^l \phi^2(\xi)d\xi + \int_0^l \phi^2_\xi (\xi) d\xi.
\end{align*}
Therefore, (24) implies 
\begin{equation*}
\exists C'>0\colon\quad \max_{x\in[0, 2\pi]} e^2(x,t) \leq C'e^{-2\delta t}\|e(0,t)\|_{\calH^1}^2.
\end{equation*}
\end{remark}

\section{Simulations}\label{s:simulations}

In this section, we present a simulative result of the proposed control laws in Section \ref{s:periodic-nodelay} and \ref{s:periodic-delay}. In the simulation, we consider a multi-agent system with $N=45$ agents. For the system parameters, we set $a = 10, K = 1$. In the figure of the deployment, the blue dashed lines are the desired formation, and the red dashed lines are the initial positions of the agents which are set to be $(0.5*\sin(i\frac{2\pi}{N}), 0.5*\cos(i\frac{2\pi}{N}), 0), i=1,\ldots,N$.  The black solid lines are the trajectories of the agents.

%

We start with the case without delay. Suppose the desired decay rate is $\delta = 0.6$. Notice that $K > \delta$ and $a \geq \frac{K^2}{K-\delta}$. Hence the decay rate is guaranteed. The performance of the system \eqref{e:displacement-pde-nodelay} is presented in Fig. \ref{fig:periodic_nodelay_x}. The first dimension of the error, i.e., $e_1(x,t)$, is depicted in Fig. \ref{fig:periodic_nodelay_e}. It can be seen that the error converges to zero.

\begin{figure}[t!]
\centering
\includegraphics[width=0.5\textwidth]{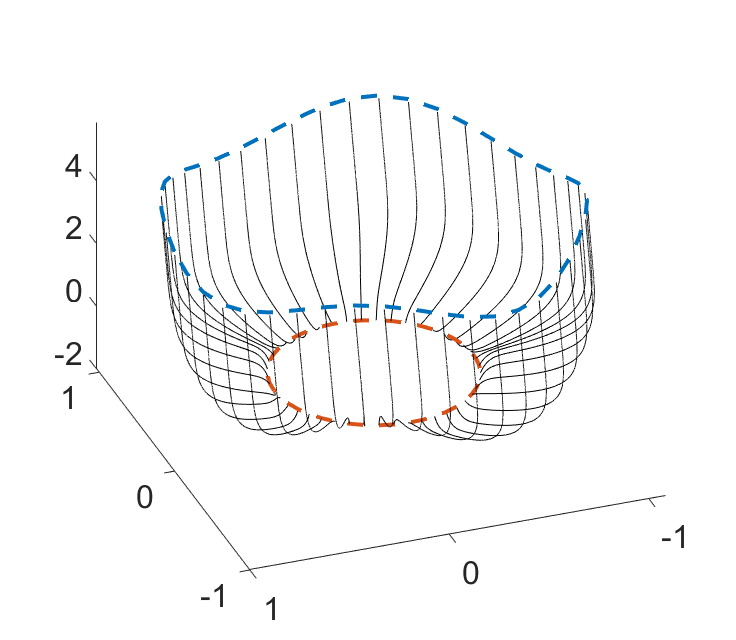}
\caption{Deployment of the agent according to the system \eqref{e:displacement-pde-nodelay} with periodic boundary condition \eqref{e:periodic}.}\label{fig:periodic_nodelay_x}
\end{figure}

\begin{figure}[t!]
\centering
\includegraphics[width=0.4\textwidth]{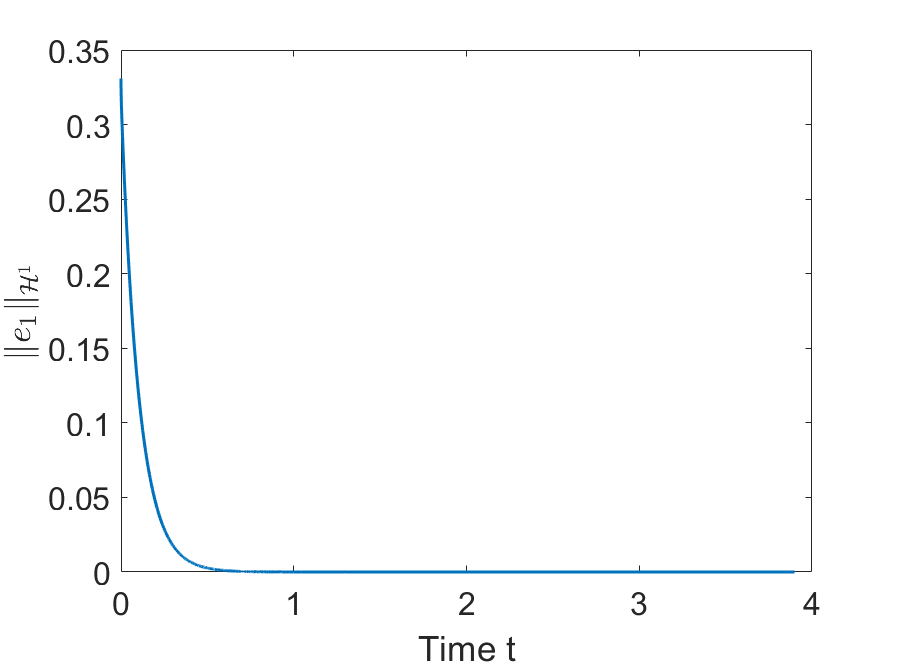}
\caption{The $\calH^1$ norm of error of the first dimension of the simulation given in Fig. \ref{fig:periodic_nodelay_x}.}\label{fig:periodic_nodelay_e}
\end{figure}

Next, we present an example for the case with delay. We choose $\delta_0=2.5$. Furthermore, the parameter $\delta_1$ in Halanay's inequality is set to be equal to $1.5\delta_0$ which is less than $2\delta_0$. The LMI conditions \eqref{e:LMI-delay} is satisfied by $p_1 = 0.19, p_2 = 0.32, p_3 = 0.12, r= 10, g=0.04, s=0.41$ and $\tau_M=0.01$ which is verified by CVX \cite{cvx}. This guarantees the same decay rate as without delay, i.e., $\delta= 0.60$. The performance of \eqref{e:displacement-pde-delay} is given in Fig. \ref{fig:periodic_delay_x} and the error of the first dimension is plotted in Fig. \ref{fig:periodic_delay_e}.

\begin{figure}[t!]
\centering
\includegraphics[width=0.5\textwidth]{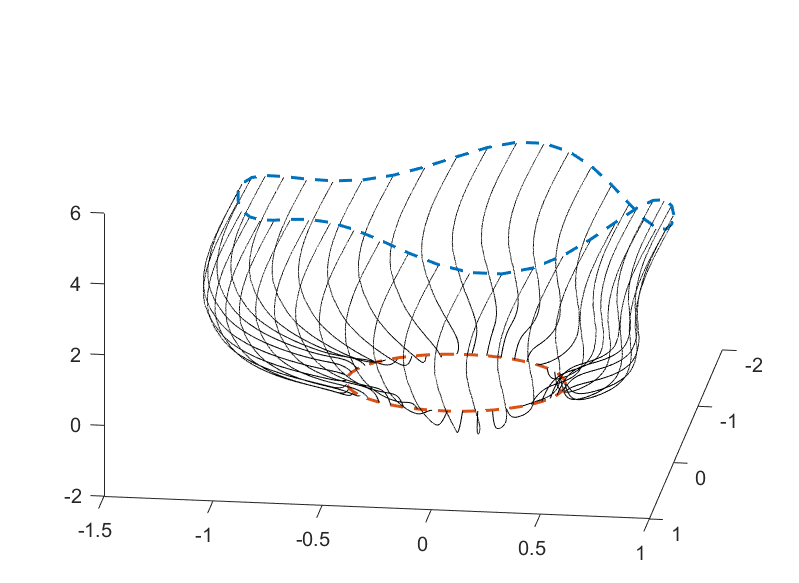}
\caption{Deployment of the agent according to the system \eqref{e:displacement-pde-delay} with periodic boundary condition \eqref{e:periodic}.}\label{fig:periodic_delay_x}
\end{figure}

\begin{figure}[t!]
\centering
\includegraphics[width=0.4\textwidth]{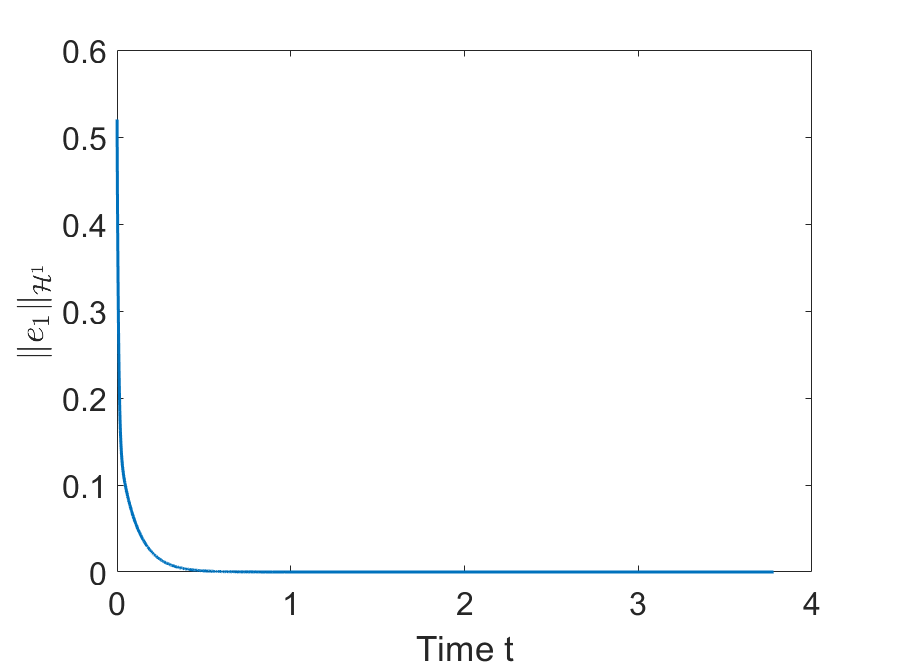}
\caption{The $\calH^1$ norm of error of the first dimension of the simulation given in Fig. \ref{fig:periodic_delay_x}.}\label{fig:periodic_delay_e}
\end{figure}

\section{Conclusion}\label{s:conclusion}

In this paper, we considered the deployment of the first-order multi-agents onto a desired closed smooth curve. The model is motivated by the displacement-based multi-agent formation control algorithm.  We assumed that the agents have access to the local information of the desired curve and their displacements with respect to their closest neighbors, whereas a leader is able to measure its absolute displacement with respect to the desired curve and transmit it to other agents through communication network. It was proved that, based on LMI conditions, by choosing appropriate controller gains, any desired decay rate can be achieved provided the delay is small enough. More precisely, exponential convergence to any closed $\calC^2$ curve is guaranteed.

\bibliographystyle{plain} 
\bibliography{ref,C:/Users/bartb/Documents/Literature/all}

\end{document}